\newtheorem{thm}{Theorem}[section]
\newtheorem{lema}[thm]{Lemma}
\newtheorem{prop}[thm]{Proposition}
\theoremstyle{definition}
\theoremstyle{remark}
\newtheorem{rem}[thm]{Remark}
\theoremstyle{example}
\numberwithin{equation}{section}
\newcommand{\R}{\mathbb R}
\newcommand{\ve}{\varepsilon}
\newcommand{\lam}{\lambda}
\newcommand{\cf}{\rightarrow}
\begin{document}
\title[Eigenvalue homogenization]{Eigenvalue homogenization for quasilinear elliptic equations with different boundary conditions}

\author[J Fern\'andez Bonder, J P Pinasco, A M Salort]{Juli\'an Fern\'andez Bonder, Juan P. Pinasco, Ariel M. Salort }
\address{Departamento de Matem\'atica
 \hfill\break \indent FCEN - Universidad de Buenos Aires and
 \hfill\break \indent   IMAS - CONICET.
\hfill\break \indent Ciudad Universitaria, Pabell\'on I \hfill\break \indent   (1428)
Av. Cantilo s/n. \hfill\break \indent Buenos Aires, Argentina.}
\email[J. Fern\'andez Bonder]{jfbonder@dm.uba.ar}
\urladdr[J. Fern\'andez Bonder]{http://mate.dm.uba.ar/~jfbonder}
\email[J.P. Pinasco]{jpinasco@dm.uba.ar}
\urladdr[J.P. Pinasco]{http://mate.dm.uba.ar/~jpinasco}
\email[A.M. Salort]{asalort@dm.uba.ar}


\subjclass[2010]{35B27, 35P15, 35P30}

\keywords{Eigenvalue homogenization, nonlinear eigenvalues, order of convergence}

\begin{abstract}
We study the rate of convergence for (variational) eigenvalues of several non-linear problems involving oscillating weights and subject to different kinds of boundary conditions in bounded domains.
\end{abstract}

\maketitle

\section{Introduction}

In this work we study the asymptotic behavior as $\ve \cf 0$ of the (variational) eigenvalues of
\begin{equation} \label{ec1}
 -div(a(x,\nabla u^\ve))+V(\tfrac{x}{\ve}) |u^\ve|^{p-2}u^\ve=\lam^\ve \rho(\tfrac{x}{\ve}) |u^\ve|^{p-2}u^\ve \quad \textrm{ in } \Omega
\end{equation}
with different boundary conditions (Dirichlet, Neumann, etc.), where $\Omega\subset \R^N$ is a bounded domain, $\ve$ is a positive real number, and $\lam^\ve$ is the eigenvalue parameter.
We also consider eigenvalue dependent boundary conditions,
\begin{align*}
\begin{cases}
 -div(a(x,\nabla u^\ve))+V(\tfrac{x}{\ve}) |u^\ve|^{p-2}u^\ve=\lam^\ve \rho(\tfrac{x}{\ve}) |u^\ve|^{p-2}u^\ve &\quad \textrm{ in } \Omega,\\
a(x,\nabla u^\ve)\nu = \lam^\ve |u^\ve|^{p-2}u^\ve &\quad \textrm{ in } \partial \Omega.
\end{cases}
\end{align*}
and the Steklov problem
\begin{align*}
\begin{cases}
 -div(a(x,\nabla u^\ve))+V(\tfrac{x}{\ve}) |u^\ve|^{p-2}u^\ve=0 &\quad \textrm{ in } \Omega,\\
 a(x,\nabla u^\ve)\nu=\lam^\ve |u^\ve|^{p-2}u^\ve  &\quad \textrm{ in } \partial \Omega.
\end{cases}
\end{align*}

The weight function $\rho(x)$ is assumed to be bounded away from zero and infinity, the potential function  $V(x)$ is bounded and the operator $a(x,\xi)$ has precise hypotheses that are stated below, but the prototypical example is
\begin{equation}\label{typical}
a(x, \xi) = A(x) |\xi|^{p-2} \xi, \qquad x\in \Omega, \ \xi\in\R^N
\end{equation}
with $1<p<+\infty$, and $A(x)$ is a uniformly elliptic matrix.

The problem of finding the asymptotic behavior of the eigenvalues of \eqref{ec1} has
relevance in different fields of applications, and it is an important part of
\emph{Homogenization Theory}.

We will consider the important case of periodic homogenization, i.e., the functions
$\rho(x)$ and $V(x)$ are assume to be $Q-$periodic functions, $Q$ being the unit cube
in $\R^N$.

The natural limit problem of \eqref{ec1} as $\ve\to 0$ is given by
\begin{equation} \label{ec1.lim}
 -div(a(x,\nabla u))+\bar V|u|^{p-2}u=\lam \bar \rho |u|^{p-2}u \quad \textrm{ in } \Omega
\end{equation}
with the corresponding boundary condition, where $\bar \rho$ and $\bar V$ are the
averages of $\rho$ and $V$ in $Q$, respectively.

In this work, we focus our attention on the order of convergence of the eigenvalues, that is, an estimate of $|\lam_k^\ve - \lam_k|$ in terms of $k$ and $\ve$, where $\lam_k^\ve$ and $\lam_k$ are the $k$--th variational eigenvalues of problems \eqref{ec1} and \eqref{ec1.lim} respectively.

The homogenization problem for eigenvalues has deserved a great deal of attention in
the past, specially in the linear case, that is, problem \eqref{ec1}  with $a(x, \xi)$
given by \eqref{typical} and $p=2$.

In particular, the first result on the order of convergence for the linear problem complemented with homogeneous Dirichlet boundary conditions can be found in \cite{OSY92} where it is proved that
$$
|\lam_k^{\ve}-\lam_k| \le Ck^{\frac{6}{N}} \ve^{\frac12}.
$$
where $C$ is a positive constant independent of $k$ and $\ve$

Later on, again in the linear case and with homogeneous Dirichlet boundary conditions,
Santonsa and Vogelius \cite{SaVo93} proved, by using asymptotic expansions, that
$$
|\lam^\ve_k - \lam_k|\leq C \ve
$$
where $C$ depends on $k$.

We want to remark that in the above mentioned works, the authors allowed for an $\ve$ dependance on the diffusion matrix in \eqref{typical}.

More recently, the linear problem in dimension $N=1$, and for the operator
$a(x,\xi)=\xi$, was studied by Castro and Zuazua in \cite{CZ00, CZ00b}. In those
articles the authors, using the so-called WKB method which relays on asymptotic
expansions of the solutions of the problem, and the explicit knowledge of the
eigenfunctions and eigenvalues of the constant coefficient limit problem, proved the
bound
$$
|\lam_k^\ve - \lam_k| \le C k^4\ve.
$$
Let us mention that their method needs higher regularity on the weight $\rho$, which must belong at least to $C^2$.

Recently, Kenig, Lin and Shen \cite{KLS11} studied the linear problem in any dimension (allowing an $\ve$ dependance in the diffusion matrix of the elliptic operator) and proved that for Lipschitz domains $\Omega$ one has
$$
|\lam_k^\ve - \lam_k| \le C \ve |\log \ve|^{\frac12 + \sigma}
$$
for any $\sigma>0$, $C$ depending on $k$ and $\sigma$.

Also, the authors show that if the domain $\Omega$ is more regular ($C^{1,1}$ is
enough) they can get rid of the logarithmic term in the above estimate. However, no
explicit dependance of $C$ on $k$ is obtained in that work.

In the non-linear case without dependence on $\ve$ in $a(x,\xi)$, and $N\geq 1$, we
proved  in \cite{FBPS12} (by means of a precise order of convergence for oscillating
integrals), that
$$
|\lam_k^\ve - \lam_k|\le C k^{\frac{2p}{N}} \ve
$$
with $C$ independent of $k$ and $\ve$.

Moreover, in \cite{FBPS12b}, for the one dimensional problem, we show that the constant entering in the above estimate can be found  explicitly and, moreover, a dependence on $\ve$ on the operator $a(x,\xi)$ was treated.

Let us stress the fact that all the above mentioned works deal with the homogenous
Dirichlet boundary condition case, with the exception of the aforementioned paper
\cite{KLS11} where also it is considered Neumann and Steklov boundary conditions and
similar results as in the Dirichlet boundary condition case were found.

Some results are known in the linear case when different boundary conditions are
considered. In the one-dimensional case with $\ve$ dependence in the operator
$a(x,\xi)$  Moskov  and Vogelius \cite{MoVo97b} by using the Osborn's eigenvalues
estimate proved that
$$|\lam^\ve_k - \lam_k|\leq C \ve$$
where $C$ depends on $k$.

Recently, in \cite{Sa12} by similar methods to those in \cite{FBPS12} the rate of
convergence of the first non-trivial curve in a weighted Fucik problem with Neumann
boundary conditions was found. As corollary  it follows that
$$
|\lam_1^\ve - \lam_1| \le C \ve, \qquad  |\lam_2^\ve - \lam_2| \le C \ve
$$
where $\lam_1^\ve$ and $\lam_2^\ve$ are the first and second eigenvalues of the nonlinear equation \eqref{ec1} with Neumann boundary conditions and $C$ does not depends on $\ve$.

\bigskip

There is a large class of usually studied eigenvalue problems related to problem
\eqref{ec1} with different boundary conditions. In this paper we analyze the most
common ones.

Let us consider $\rho,V$ two $Q-$periodic functions, being $Q$ the unit cube in $\R^N$, which satisfy
\begin{align} \label{cota.rho}
  0<\rho^-\leq \rho(x) \leq \rho^+<+\infty & \quad \textrm{a.e. } \Omega\quad \text{and}\quad
V\in L^\infty(\Omega).
\end{align}
for certain constants $\rho^-<\rho^+$. We denote $\rho_\ve(x):=\rho(\tfrac{x}{\ve})$ and  $V_\ve(x):=V(\tfrac{x}{\ve})$.

\medskip

For each $\ve>0$ fixed, we define the following eigenvalue problems:

\medskip

\begin{itemize}
  \item Dirichlet problem:
\begin{align} \label{ec.eps.d}
D_\ve(\Omega):
\begin{cases}
 -div(a(x,\nabla u^\ve))+V_\ve |u^\ve|^{p-2}u^\ve=\lam^\ve \rho_\ve |u^\ve|^{p-2}u^\ve &\quad \textrm{ in } \Omega,\\
 u^\ve=0 &\quad \textrm{ in } \partial \Omega.
\end{cases}
\end{align}
  \item Neumann problem
\begin{align} \label{ec.eps.n}
N_\ve(\Omega):
\begin{cases}
 -div(a(x,\nabla u^\ve))+V_\ve |u^\ve|^{p-2}u^\ve=\lam^\ve \rho_\ve |u^\ve|^{p-2}u^\ve &\quad \textrm{ in } \Omega,\\
 a(x,\nabla u^\ve)\nu=0  &\quad \textrm{ in } \partial \Omega.
\end{cases}
\end{align}
 \item Robin problem
\begin{align} \label{ec.eps.r}
R_\ve(\Omega):
\begin{cases}
 -div(a(x,\nabla u^\ve))+V_\ve |u^\ve|^{p-2}u^\ve=\lam^\ve \rho_\ve |u^\ve|^{p-2}u^\ve &\quad \textrm{ in } \Omega,\\
 a(x,\nabla u^\ve)\nu+\beta |u^\ve|^{p-2}u^\ve=0  &\quad \textrm{ in } \partial \Omega.
\end{cases}
\end{align}
\item Non-flux problem:
\begin{align} \label{ec.eps.p}
P_\ve(\Omega):
\begin{cases}
 -div(a(x,\nabla u^\ve))+V_\ve |u^\ve|^{p-2}u^\ve=\lam^\ve \rho_\ve |u^\ve|^{p-2}u^\ve &\quad \textrm{ in } \Omega,\\
 u^\ve=\textrm{constant} &\quad \textrm{ in } \partial \Omega, \\
 \int_{\partial\Omega} a(x,\nabla u^\ve) \nu\, dS=0.
\end{cases}
\end{align}
 \item Eigenvalue dependent boundary condition
\begin{align} \label{ec.eps.t}
B_\ve(\Omega):
\begin{cases}
 -div(a(x,\nabla u^\ve))+V_\ve |u^\ve|^{p-2}u^\ve=\lam^\ve \rho_\ve |u^\ve|^{p-2}u^\ve &\quad \textrm{ in } \Omega,\\
a(x,\nabla u^\ve)\nu = \lam^\ve |u^\ve|^{p-2}u^\ve &\quad \textrm{ in } \partial \Omega.
\end{cases}
\end{align}
  \item Steklov problem
\begin{align} \label{ec.eps.s}
S_\ve(\Omega):
\begin{cases}
 -div(a(x,\nabla u^\ve))+V_\ve |u^\ve|^{p-2}u^\ve=0 &\quad \textrm{ in } \Omega,\\
 a(x,\nabla u^\ve)\nu=\lam^\ve |u^\ve|^{p-2}u^\ve  &\quad \textrm{ in } \partial \Omega.
\end{cases}
\end{align}
\end{itemize}
We consider $\Omega$ to be a bounded domain in $\R^N$, the smoothness will be precised
in each case. The operator $a(x,\xi)$ satisfy properties (H0)--(H8) given in \S 2.
Observe that in problems \eqref{ec.eps.d}--\eqref{ec.eps.s}  the operator $a(x,\xi)$
does not depends on $\ve$, the dependence on the parameter appears only in the weights
and potential functions. By $\nu$ we denote the outer unit normal vector with respect
to $\partial \Omega$. The parameter $\beta$ in the Robin problem is in $[0,\infty)$.
We observe that when $\beta=0$ it corresponds to the Neumann problem and when
$\beta=\infty$ it corresponds with the Dirichlet problem. In the boundary condition
problem \eqref{ec.eps.t} and in the Steklov problem \eqref{ec.eps.s} we require the
potential function to be strictly positive, i.e., there exists $V^->0$ such that $V^-
\le V(x)$ a.e. in $\Omega$. Observe that this requirement is not necessary in
\eqref{ec.eps.d}--\eqref{ec.eps.p} since the hypothesis of $V$ being bounded below
away from zero can be assumed without loss of generality.

In \eqref{ec.eps.d}--\eqref{ec.eps.s} the natural limit problems (as $\ve\to 0$) are the analogous ones with the weights $\rho_\ve$ and the potentials $V_\ve$ replaced by their averages in the unit cube $Q$, i.e.
$$
\bar\rho = \int_Q \rho(y)\, dy,\qquad \bar V = \int_Q V(y)\, dy.
$$

\medskip

It is not difficult to see, for any of the problems \eqref{ec.eps.d}--\eqref{ec.eps.s}, that if $\lam^\ve$ is a convergent sequence of eigenvalues as $\ve\cf 0$ then $\lam=\lim_{\ve\to 0}\lam^\ve$ is an eigenvalue of the corresponding limit problem and, up to some subsequence, the associated eigenfunctions $u^\ve$ converge weakly to an associated eigenfunction $u$ of the corresponding limit problem.

For the Dirichlet problem \eqref{ec.eps.d} this fact was proved in \cite{BCR06} (see also \cite{FBPS12} for a simplified proof of this result). The proofs for the others problems \eqref{ec.eps.n}--\eqref{ec.eps.s} are analogous.

Our aim is to study the order of convergence of the eigenvalues of problems \eqref{ec.eps.d}--\eqref{ec.eps.s} to those of the limit equations.

\medskip
Using results concerning to oscillating integrals, we prove our main results:

\begin{thm}\label{rateDNRT}
Let $\lam_k^\ve$ be the $k$--th variational eigenvalue associated to any of the problems \eqref{ec.eps.d}--\eqref{ec.eps.t}, respectively. Let $\lam_k$ be the $k$--th variational eigenvalue associated to the correspondent limit problem. Then there exists a constant $C>0$ independent of the parameters $\ve$ and $k$ such that
$$
|\lam_k^\ve - \lam_k| \leq C k^{\frac{2p}{N}}\ve.
$$
\end{thm}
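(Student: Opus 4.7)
The plan is to reduce problems \eqref{ec.eps.d}--\eqref{ec.eps.t} to a common Lusternik--Schnirelmann min--max framework and to compare the Rayleigh quotients at scales $\ve$ and $0$ on near-optimal admissible classes. In each case one writes
$$
\lam_k^\ve = \inf_{C\in \Sigma_k}\sup_{u\in C}\mathcal{R}_\ve(u),\qquad \mathcal{R}_\ve(u)=\frac{A_\ve(u)}{B_\ve(u)},
$$
where $\Sigma_k$ is the collection of symmetric compact subsets of genus at least $k$ inside the natural function space of each problem, and $A_\ve,B_\ve$ are the numerator and denominator of the Rayleigh quotient associated to the equation and the boundary integrals proper to each case. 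The limit problem produces $\lam_k$ through the same expression with $\rho_\ve,V_\ve$ replaced by the averages $\bar\rho,\bar V$.

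The first step is a uniform pointwise \emph{Rayleigh comparison}
$$
|\mathcal{R}_\ve(u)-\mathcal{R}_0(u)|\le C\,\ve\,(1+\mathcal{R}_0(u))^{2}
$$
for admissible $u$ normalized by $B_0(u)=1$. The key input is the oscillating--integral estimate proved in \cite{FBPS12},
$$
\left|\int_\Omega [\phi(x/\ve)-\bar\phi]\,|u|^p\,dx\right|\le C\,\ve\,\|u\|_{W^{1,p}(\Omega)}^p,\qquad \phi\in\{\rho,V\},
$$
which bounds both $|A_\ve-A_0|$ and $|B_\ve-B_0|$. Writing
$$
\mathcal{R}_\ve-\mathcal{R}_0=\frac{(A_\ve-A_0)B_0-A_0(B_\ve-B_0)}{B_\ve B_0},
$$
using the ellipticity hypotheses (H0)--(H8) on $a(x,\xi)$ to absorb $\|u\|_{W^{1,p}}^p$ into a constant times $1+A_0(u)$, and the pointwise bounds $\rho^-\le\rho\le\rho^+$ to keep $B_\ve B_0$ bounded below, yields the comparison.

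Inserting a near-optimal class for $\lam_k$ into the min--max for $\lam_k^\ve$ (and conversely) then gives the two-sided estimate
$$
|\lam_k^\ve-\lam_k|\le C\,\ve\,\max\{\lam_k,\lam_k^\ve\}^{2}.
$$
To extract the $k$--dependence I invoke the Weyl-type upper bound $\lam_k\le C k^{p/N}$, together with its $\ve$--uniform analogue for $\lam_k^\ve$; both follow by testing the min--max with $k$ disjointly supported bumps on balls of volume $\sim |\Omega|/k$, the uniformity in $\ve$ being free because $\rho,V\in L^\infty$ are bounded above and $\rho$ is bounded away from zero. Combining the two ingredients delivers exactly $|\lam_k^\ve-\lam_k|\le Ck^{2p/N}\ve$.

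The main obstacle I expect is the eigenvalue-dependent boundary problem $B_\ve(\Omega)$ in \eqref{ec.eps.t}, where $\lam^\ve$ also appears on $\partial\Omega$. Its Rayleigh quotient
$$
\mathcal{R}_\ve(u)=\frac{\int_\Omega a(x,\nabla u)\cdot\nabla u\,dx+\int_\Omega V_\ve|u|^p\,dx}{\int_\Omega \rho_\ve|u|^p\,dx+\int_{\partial\Omega}|u|^p\,dS}
$$
carries the $\ve$--independent boundary integral only in the denominator, so the oscillating--integral perturbation acts in the bulk alone; the assumption $V\ge V^->0$ is used here to keep $A_\ve,A_0$ coercive so that the previous comparison still applies uniformly. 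The Neumann, Robin and non-flux cases follow the Dirichlet scheme with minor modifications, using standard trace inequalities to absorb the boundary contributions, and introduce no new analytic ingredient beyond what is already needed for the Dirichlet case.
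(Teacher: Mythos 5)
Your skeleton is the same as the paper's: near-optimal genus-$k$ sets in the min--max characterization, an oscillating-integral estimate to compare the Rayleigh quotients at scale $\ve$ and at scale $0$, a resulting two-sided bound of the form $|\lam_k^\ve-\lam_k|\le C\ve\max\{\lam_k^2+\lam_k,(\lam_k^\ve)^2+\lam_k^\ve\}$, and finally a Weyl-type bound $\lam_k,\lam_k^\ve\le Ck^{p/N}$ to produce $Ck^{2p/N}\ve$. (Your way of getting the Weyl bound, by testing with $k$ disjointly supported bumps, is a harmless variant of the paper's route, which chains the inequalities \eqref{BNRPD} to reduce to the Dirichlet $p$-Laplacian and quotes the known bound \eqref{cc2}.)

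There is, however, one genuine gap, and it sits exactly at the point where this theorem goes beyond the Dirichlet case already treated in \cite{FBPS12}. The oscillating-integral estimate you call ``the key input, proved in \cite{FBPS12}'', namely $\bigl|\int_\Omega(\phi(\tfrac{x}{\ve})-\bar\phi)|u|^p\bigr|\le C\ve\|u\|_{W^{1,p}(\Omega)}^p$, is \emph{not} what that reference proves: Theorem \ref{teo_n_dim} there is stated only for $u\in W^{1,p}_0(\Omega)$, with the right-hand side $C\ve\|\nabla u\|_{L^p(\Omega)}^p$, and its proof uses in an essential way that the test functions vanish on $\partial\Omega$ (Lipschitz regularity suffices there). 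For the Neumann, Robin, non-flux and eigenvalue-dependent problems the admissible sets lie in $W^{1,p}(\Omega)$ (or $W^{1,p}_0(\Omega)\oplus\R$), and the version of the estimate valid for such functions is precisely the additional ingredient that must be supplied; in the paper it is Theorem \ref{teo.neu} (from \cite{Sa12}), which requires $C^1$ regularity of $\partial\Omega$ and is applied to $|u|^p\in W^{1,1}(\Omega)$ through Lemma \ref{lema0}. As written, your argument for all the non-Dirichlet boundary conditions rests on an estimate the cited source does not contain, and whose validity depends on boundary regularity that you never invoke (note that the paper assumes only Lipschitz regularity for \eqref{ec.eps.d} and \eqref{ec.eps.p}, and for the non-flux case deliberately avoids the $W^{1,p}(\Omega)$ estimate by splitting $u=v+c$ with $v\in W^{1,p}_0(\Omega)$ and controlling the constant $c$ by the trace inequality --- a step your ``minor modifications'' remark glosses over). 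Either prove the $W^{1,p}(\Omega)$ estimate or cite it correctly with the $C^1$ hypothesis; with that repaired, the rest of your outline closes along the same lines as the paper.
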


\begin{thm}\label{rateS}
Let $\lam_k^\ve$ be the $k$--th variational eigenvalue associated to equation \eqref{ec.eps.s}. Let $\lam_k$ be the $k$--th variational eigenvalue associated to the correspondent limit problem. Then there exists a constant $C>0$ independent of the parameters $\ve$ and $k$ such that
$$
|\lam_k^\ve - \lam_k| \leq C k^{\frac{p-1}{N-1}}\ve.
$$
\end{thm}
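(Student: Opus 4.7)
The plan is to combine the min-max characterization of the variational eigenvalues with a precise oscillating-integral estimate for $V_\ve-\bar V$, as in the Dirichlet argument of \cite{FBPS12}, and then to adapt the $k$-dependence to the Steklov geometry. The Rayleigh quotient for $S_\ve(\Omega)$ is
\begin{equation*}
R^\ve(u)=\frac{\int_\Omega a(x,\nabla u)\cdot\nabla u\,dx+\int_\Omega V_\ve|u|^p\,dx}{\int_{\partial\Omega}|u|^p\,dS},
\end{equation*}
and $R^0(u)$ is defined by replacing $V_\ve$ with $\bar V$. The key simplification compared with \eqref{ec.eps.d}--\eqref{ec.eps.t} is that the denominator carries no oscillating weight, so whenever $\|u\|_{L^p(\partial\Omega)}=1$ we have the clean identity $R^\ve(u)-R^0(u)=\int_\Omega(V_\ve-\bar V)|u|^p\,dx$.

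Next I would apply the oscillating-integral bound
\begin{equation*}
\Bigl|\int_\Omega(V_\ve-\bar V)f\,dx\Bigr|\le C\ve\|f\|_{W^{1,1}(\Omega)},
\end{equation*}
which follows by decomposing $\Omega$ into $\ve$-cubes and using that $V-\bar V$ has zero mean on $Q$. Taking $f=|u|^p$ and applying H\"older's inequality yields $\||u|^p\|_{W^{1,1}(\Omega)}\le C\|u\|_{W^{1,p}(\Omega)}^p$, so
\begin{equation*}
|R^\ve(u)-R^0(u)|\le C\ve\,\|u\|_{W^{1,p}(\Omega)}^p.
\end{equation*}
Inserting this in the Liusternik--Schnirelmann formula $\lam_k^\ve=\inf_{F\in\Sigma_k}\sup_{u\in F}R^\ve(u)$ over symmetric subsets $F$ of the unit $L^p(\partial\Omega)$-sphere with genus at least $k$, and running the symmetric argument with $R^\ve$ and $R^0$ exchanged, gives $|\lam_k^\ve-\lam_k|\le C\ve\sup_{u\in F}\|u\|_{W^{1,p}(\Omega)}^p$ for any nearly optimal $F$.

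To turn the $W^{1,p}$ control into a bound by $\lam_k$, I would exploit the ellipticity $a(x,\xi)\cdot\xi\ge c_0|\xi|^p$ coming from (H0)--(H8), together with the standing hypothesis $V\ge V^->0$ (assumed precisely in the Steklov setting): for any $u$ with $\|u\|_{L^p(\partial\Omega)}=1$ and $R^0(u)\le\lam_k+\del$ one has
\begin{equation*}
c_0\|\nabla u\|_{L^p(\Omega)}^p+V^-\|u\|_{L^p(\Omega)}^p\le \lam_k+\del,
\end{equation*}
and hence $\|u\|_{W^{1,p}(\Omega)}^p\le C\lam_k$. Combining with the previous step yields the clean intermediate bound $|\lam_k^\ve-\lam_k|\le C\lam_k\ve$.

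The final ingredient is a Weyl-type upper bound $\lam_k\le Ck^{(p-1)/(N-1)}$, which is what distinguishes this exponent from the $k^{2p/N}$ of Theorem \ref{rateDNRT}. The presence of $N-1$ in the denominator reflects the fact that the normalization is on the $(N-1)$-dimensional boundary: the natural trial families are obtained by partitioning a thin tubular neighborhood of $\partial\Omega$ into $\sim k$ cells of boundary diameter $k^{-1/(N-1)}$ and taking cutoffs supported in each, producing an element of $\Sigma_k$ whose Rayleigh quotient is of the claimed order. Proving this boundary Weyl-type estimate in a form compatible with the nonlinear Liusternik--Schnirelmann class is the main technical step; once it is in place, substitution into $|\lam_k^\ve-\lam_k|\le C\lam_k\ve$ completes the proof.
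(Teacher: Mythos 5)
Your argument for the $\ve$-dependence is essentially the paper's: you exploit that only the potential oscillates in the Steklov quotient, apply the oscillating-integral bound $\bigl|\int_\Omega (V_\ve-\bar V)|u|^p\bigr|\le C\ve \||u|^p\|_{W^{1,1}(\Omega)}$ (this is exactly Theorem \ref{teo.neu}, whose proof is where the $C^1$ regularity of $\partial\Omega$ enters --- your ``decompose into $\ve$-cubes'' sketch hides the boundary-layer estimate), control $\||u|^p\|_{W^{1,1}(\Omega)}\le C\|u\|_{W^{1,p}(\Omega)}^p$ as in Lemma \ref{lema0}, use coercivity together with $V\ge V^->0$ to bound $\|u\|_{W^{1,p}(\Omega)}^p$ by the eigenvalue on nearly optimal genus-$k$ sets, and run the two-sided min-max comparison to get $|\lam_k^\ve-\lam_k|\le C\ve\max\{\lam_k,\lam_k^\ve\}$. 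This is the same chain as in the paper's proof of Theorem \ref{rateS}.

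The one ingredient you defer --- the Weyl-type bound $\lam_k\le Ck^{\frac{p-1}{N-1}}$, which you call the main technical step --- is obtained in the paper much more cheaply: by \eqref{cota.rho} and \eqref{cont.coer.phi} the Steklov Rayleigh quotient is dominated, up to the constant $\max\{\tfrac1\alpha,V^+\}$, by the Rayleigh quotient of the $p$-Laplacian Steklov problem $-\Delta_p u+|u|^{p-2}u=0$ with $|\nabla u|^{p-2}\partial_\nu u=\mu|u|^{p-2}u$, whose variational eigenvalues satisfy $\mu_k\le ck^{\frac{p-1}{N-1}}$ by \cite{Pi07}; this is Lemma \ref{lema.comp.S}. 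So you do not need to build the tubular-neighborhood trial families yourself (your scaling heuristic, cells of boundary diameter $k^{-1/(N-1)}$ giving quotient of order $k^{\frac{p-1}{N-1}}$, is indeed the mechanism behind the cited estimate, but carrying it out in the genus framework is precisely what the reference already provides). With that substitution your proposal closes and coincides with the paper's proof.
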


\begin{rem}
Let us note that, for problem \eqref{ec.eps.t}, the bound can be improved when $p<N$,
and we get
$$
|\lam_k^\ve - \lam_k| \leq C k^{\frac{2(p-1)}{N-1}}\ve.
$$
\end{rem}

The rest of the paper is organized as follows. In Section \S 2 we introduce the class
of operators considered and the hypotheses on the functions $a(x, \xi)$, $\rho$, $V$,
and the notation which will be used. In Section \S 3 we analyze the eigenvalue
problems and we show the relationships between them. Section \S 4 is devoted to
oscillatory integrals, and in Section \S 5 we prove the main results.

\section{Preliminary results}
\subsection{Monotone operators} \label{sec.op.monot}
We start this section by making the precise assumptions on the function $a(x,\xi)$

Let $\Omega\subset \R^N$, $N\geq 1$ be a bounded domain. We consider $a\colon \Omega\times \R^N\to \R^N$ that satisfies the following conditions:

\begin{enumerate}
\item[(H0)] {\em measurability:} $a(\cdot,\cdot)$ is a Carath\'eodory function, i.e. $a(x,\cdot)$ is continuous a.e. $x\in \Omega$,  and $a(\cdot,\xi)$ is measurable for every $\xi\in\R^N$.

\item[(H1)] {\em monotonicity:} $0\le (a(x,\xi_1)-a(x,\xi_2))(\xi_1-\xi_2)$.
\index{coercive}
\item[(H2)] {\em coercivity:} $\alpha |\xi|^p \le a(x,\xi) \xi$.

\item[(H3)] {\em continuity:} $a(x,\xi)\le \beta|\xi|^{p-1}$.
\index{$p-$homogeneity}
\item[(H4)] {\em $p-$homogeneity:} $a(x,t\xi)=t^{p-1} a(x,\xi)$ for every $t>0$.

\item[(H5)] {\em oddness:} $a(x,-\xi) = -a(x,\xi)$.
\end{enumerate}

Let us introduce $\Psi(x,\xi_1, \xi_2)=a(x,\xi_1) \xi_1 +a(x,\xi_2) \xi_2$ for all $\xi_1, \xi_2 \in \R^N$, and all $x\in \Omega$; and let $\delta=min\{p/2, (p-1)\}$.

\begin{enumerate}
\item[(H6)] {\em equi-continuity:}
$$
|a(x,\xi_1) -a(x,\xi_2)| \le c \Psi(x,\xi_1, \xi_2)^{(p-1-\delta)/p}(a(x,\xi_1) -a(x,\xi_2)) (\xi_1-\xi_2)^{\delta/p}
$$
\index{cyclical monotonicity}
\item[(H7)] {\em cyclical monotonicity:} $\sum_{i=1}^k  a(x,\xi_i) (\xi_{i+1}-\xi_i) \le 0$, for all $k\ge 1$, and $\xi_1,\ldots, \xi_{k+1}$, with $\xi_1=\xi_{k+1}$.

\item[(H8)] {\em strict monotonicity:} let $\gamma = \max(2,p)$, then
$$
\alpha |\xi_1-\xi_2|^{\gamma}\Psi(x,\xi_1,\xi_2)^{1-(\gamma/p)}\le (a(x,\xi_1)-a(x,\xi_2)) (\xi_1-\xi_2).
$$
\end{enumerate}

Hypotheses (H1)--(H3) are necessary to ensure the $G-$convergence of the operators associated to $a(x,\xi)$. On the other hand, hypotheses (H4)--(H7) are all important in the context of a well-posed eigenvalue problem. We assume (H8) for technical reasons.

We add that the conditions (H0)--(H8) are not completely independent of each other. It can be seen easily that (H8) implies (H1)--(H2) and that (H4) implies (H3) in addition to the continuity of the coefficient, for details see \cite{BCR06}.

\medskip

\begin{rem}
The prototype for such functions is $a(x,\xi)=A(x)|\xi|^{p-2} \xi$, where $A(\cdot)$ is a measurable function with values in the set of $N\times N$ symmetric matrices which satisfies
$$\alpha' |\xi|^2 \leq A(x)\xi\cdot \xi, \quad |A(x) \xi| \leq \beta' |\xi| \quad \forall \xi\in \R^N, \textrm{ a.e. } x\in\Omega.$$
for some positive constants $\alpha'$ and $\beta'$.
\end{rem}

\medskip

In particular, under these conditions, we have the following Proposition due to Baffico, Conca and Rajesh \cite{BCR06}

\begin{prop}\label{potential.f}
Given $a(x,\xi)$ satisfying conditions {\em (H0)--(H8)}, there exists a unique
Carath\'eodory function $\Phi$ which is even, $p-$homogeneous strictly convex and
differentiable in the variable $\xi$ satisfying
\begin{equation}\label{cont.coer.phi}
\alpha |\xi|^p \le \Phi(x,\xi)\le \beta |\xi|^p
\end{equation}
for all $\xi\in \R^N$ a.e. $x\in\Omega$ such that
$$
\nabla_{\xi} \Phi(x,\xi)= p a(x,\xi)
$$
and normalized such that $\Phi(x,0)=0$.
\end{prop}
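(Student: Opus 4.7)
The existence part is a Rockafellar-type potential construction: hypothesis (H7) is exactly cyclical monotonicity of the Carath\'eodory map $a(x,\cdot)$, so Rockafellar's theorem produces, for each fixed $x$, a convex function $\Phi(x,\cdot)$, unique up to an additive constant, whose subdifferential contains $p\,a(x,\cdot)$. I would normalize it by $\Phi(x,0)=0$. The bulk of the work is then to identify this abstract $\Phi$ with the concrete expression
\begin{equation*}
\Phi(x,\xi) = a(x,\xi)\cdot \xi.
\end{equation*}

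The identification proceeds via one-dimensional integration along the radial segment from $0$ to $\xi$. Indeed, $\varphi(t) := \Phi(x,t\xi)$ is convex in $t$, and $p\,a(x,t\xi)\cdot\xi$ lies in its subdifferential for every $t\in(0,1)$. The fundamental theorem of calculus for convex functions with a measurable subgradient selection therefore gives
\begin{equation*}
\Phi(x,\xi) = \Phi(x,0) + \int_0^1 p\,a(x,t\xi)\cdot\xi\,dt.
\end{equation*}
By the $(p-1)$-homogeneity (H4) the integrand equals $p\,t^{p-1}\,a(x,\xi)\cdot\xi$, which integrates to exactly $a(x,\xi)\cdot\xi$, confirming the formula.

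With the explicit expression in hand, the remaining claims fall out quickly. Homogeneity $\Phi(x,t\xi)=t^p\Phi(x,\xi)$ is direct from (H4); evenness follows from (H5); the lower bound in \eqref{cont.coer.phi} is (H2) verbatim and the upper bound uses $|a(x,\xi)|\le \beta|\xi|^{p-1}$ from (H3) together with Cauchy--Schwarz. Strict convexity is equivalent to strict monotonicity of the subgradient selection $p\,a$, which is provided by (H8). The equi-continuity (H6) makes $a(x,\cdot)$ continuous in $\xi$, so the subdifferential is a continuous single-valued selection, upgrading $\Phi(x,\cdot)$ to class $C^1$ with $\nabla_\xi \Phi = p\,a$ everywhere. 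Measurability of $\Phi$ in $x$ is automatic from the explicit formula together with (H0). Uniqueness is then trivial: any two candidates share the same $\xi$-gradient and the same value at $0$.

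\textbf{Main obstacle.} The only non-routine ingredient is the Rockafellar construction and its interplay with the $\xi$-homogeneity. Once (H7) has produced some convex potential with $p\,a$ as a subgradient selection, the $(p-1)$-homogeneity (H4) plays a double role: it guarantees that radial integration recovers the closed form $a(x,\xi)\cdot\xi$, and it simultaneously forces that formula to be $p$-homogeneous, as claimed. Everything else reduces to standard convex analysis, and the only technical nuisance is keeping joint $(x,\xi)$-measurability under control, which the Carath\'eodory assumption (H0) handles uniformly.
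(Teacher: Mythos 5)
Your argument is correct in substance, but note that the paper does not actually prove this proposition: its ``proof'' is a citation to Lemma 3.3 of Baffico--Conca--Rajesh \cite{BCR06}, where the construction is carried out. Your blind reconstruction --- Rockafellar's theorem applied to the cyclically monotone map from (H7), the normalization $\Phi(x,0)=0$, radial integration combined with (H4) to identify $\Phi(x,\xi)=a(x,\xi)\cdot\xi$ (equivalently, Euler's identity for a $p$-homogeneous potential), and then (H2)--(H3) for the bounds \eqref{cont.coer.phi}, (H5) for evenness, (H8) for strict convexity, and (H0) for joint measurability via the explicit formula --- is the standard route and is essentially the argument behind the cited lemma, so the content is right even though it cannot be compared line by line with the paper. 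Two small points deserve more care than you give them. First, continuity of $a(x,\cdot)$ is already part of the Carath\'eodory hypothesis (H0); (H6) merely quantifies it, so it is not the reason the selection is continuous. Second, and more substantively, Rockafellar's theorem only yields $p\,a(x,\xi)\in\partial_\xi\Phi(x,\xi)$; to conclude that $\Phi(x,\cdot)$ is differentiable with $\nabla_\xi\Phi=p\,a$ you must show the subdifferential is a singleton at every $\xi$. This follows, for instance, because a continuous monotone map defined on all of $\R^N$ is maximal monotone, so the monotone graph $\partial_\xi\Phi$ cannot strictly contain that of $p\,a(x,\cdot)$; alternatively one can use a.e.\ differentiability of finite convex functions together with continuity of $a(x,\cdot)$ to see that every subgradient at $\xi$ is a limit of values $p\,a(x,\xi_j)$ with $\xi_j\to\xi$. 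With that step made explicit, your proof is complete, and it has the merit of being self-contained where the paper defers entirely to \cite{BCR06}.
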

\begin{proof}
See Lemma 3.3 in \cite{BCR06}.
\end{proof}

\subsection{General hypotheses and notation}

Throughout the paper the following hypotheses and notation will be used:
\begin{itemize}
\item By $Q$ we always mean the unit cube in $\R^N$, i.e. $Q=[0,1]^N$.

\item The functions $g, \rho, V$ will always refer to functions in $L^\infty(\R^N)$ that are $Q-$periodic.

\item For $\ve>0$ we denote
$$
g_\ve(x) = g(\tfrac{x}{\ve}),\qquad \rho_\ve(x) = \rho(\tfrac{x}{\ve}),\qquad V_\ve(x) = V(\tfrac{x}{\ve}).
$$

\item The average on $Q$ will be denoted by
$$
\bar g = \int_Q g(y)\, dy, \qquad \bar \rho = \int_Q \rho(y)\, dy,\qquad \bar V  = \int_Q V(y)\, dy.
$$

\item Observe that $g_\ve\rightharpoonup \bar g$, $\rho_\ve\rightharpoonup \bar \rho$ and $V_\ve\rightharpoonup \bar V$ (as $\ve\to 0$) weakly *  in $L^\infty$.

\item On the weight function $\rho$ we assume that it is bounded away from zero. That is, there exist constants $0<\rho^- < \rho^+<\infty$ such that
$$
\rho^- \le \rho(x)\le \rho^+.
$$

\item On the potential function $V$, for the problems \eqref{ec.eps.t} and
    \eqref{ec.eps.s}, we assume that the potential function is strictly positive,
    i.e., there exists $V^->0$ such that
    $$V^- \le V(x) \; \mbox{ a.e. in } \; \Omega.$$
    \item Lipschitz regularity of the  domain $\Omega$ is assumed in problems
        \eqref{ec.eps.d} and  \eqref{ec.eps.p}, and $C^1$ regularity in all other
        cases.
\end{itemize}

\section{Eigenvalues}
In this section we define the variational eigenvalues of problems
\eqref{ec.eps.d}--\eqref{ec.eps.s} and collect some properties and relations between
them. We focus only on the properties of the eigenvalues that will be used later, a
more detailed study can be found in the following references: the Dirichlet problem
  \eqref{ec.eps.d} was studied by Garc\'ia--Azorero and Peral Alonso in \cite{GaPa}; the Neumann problem
can be found in the work of Huang \cite{Lin}; the Robin problem \eqref{ec.eps.r} can be
found in \cite{Le}, together with the  Non-flux problem \eqref{ec.eps.p}; both
generalize periodic and separated boundary conditions in classical Sturm Liouville
problems, studied by several authors, see the paper of Binding and Rynne \cite{BR}
among others; the problem with Eigenvalue dependent boundary conditions
\eqref{ec.eps.t} was studied by Binding, Browne and Watson \cite{BBW} in the one
dimensional case,  and the
 Steklov problem \eqref{ec.eps.s} was considered by Fern\'andez Bonder and Rossi in \cite{FBR}.
Finally, more general monotone operators than the $p-$Laplacian, like the ones we will
consider here, were studied by Kawohl, Lucia and Prashanth, see \cite{KLP07}.

First of all, we observe that by replacing $\lam^\ve$ by $\lam^\ve + \|V\|_{\infty} + V_-$ in \eqref{ec.eps.d}--\eqref{ec.eps.p} we can assume that the potential function verifies that
$$
V(x)\ge V_->0
$$
In problems \eqref{ec.eps.t}--\eqref{ec.eps.s} this has to be imposed on $V$.

By means of the  Ljusternik-Schnirelmann theory (see \cite{Sz88} for instance) we know that the variational spectrum of these problems consists in countable sequences of positive eigenvalues tending to $+\infty$. Define the followings functionals
\begin{align*}
&F(u,\rho)=\int_\Omega \rho |u|^p, \\
&G(u)=\int_\Omega  \Phi(x,\nabla u), \\
&H(u)=\int_{\partial\Omega}   |u|^p,
\end{align*}
where $\Phi(x,\xi)$ is the potential function given in Proposition \ref{potential.f}.
Then, for each fixed $\ve>0$ we can give the characterization of the $k$--th variational eigenvalues of \eqref{ec.eps.d}--\eqref{ec.eps.s} as follow:
\begin{align} \label{carac.autov}
\begin{split}
&\lam_k^D = \inf_{C\in \tilde\Gamma_k} \sup_{u \in C} \frac{G(u)+F(u,V)}{F(u,\rho)},\\
&\lam_k^N = \inf_{C\in \Gamma_k} \sup_{u \in C} \frac{G(u)+F(u,V)}{F(u,\rho)}, \\
&\lam_k^R = \inf_{C\in \Gamma_k} \sup_{u \in C} \frac{\beta H(u)+G(u)+F(u,V)}{F(u,\rho)}, \\
&\lam_k^P = \inf_{C\in \bar\Gamma_k} \sup_{u \in C} \frac{G(u)+F(u,V)}{F(u,\rho)},\\
&\lam_k^B = \inf_{C\in \Gamma_k} \sup_{u \in C} \frac{G(u)+F(u,V)}{H(u)+F(u,\rho)},\\
&\lam_k^S = \inf_{C\in \Gamma_k} \sup_{u \in C} \frac{G(u)+F(u,V)}{H(u)}.
\end{split}
\end{align}
Here,
\begin{align*}
&\Gamma_k=\{C\subset W^{1,p}(\Omega) : C \textrm{ compact, } C=-C, \,  \, \gamma(C)\geq k\},\\
&\tilde \Gamma_k=\{C\subset W^{1,p}_0(\Omega) : C \textrm{ compact, } C=-C, \,  \, \gamma(C)\geq k\},\\
&\bar \Gamma_k=\{C\subset W^{1,p}_0(\Omega)\oplus\R : C \textrm{ compact, } C=-C, \,  \, \gamma(C)\geq k\}
\end{align*}
and $\gamma(C)$ is the Kranoselskii genus (see for instance \cite{Ra74,ZSP03} for definition and properties).

From the variational characterization, we immediately obtain the following inequalities
\begin{equation}\label{BNRPD}
\lam_k^B\le \lam_k^N\le \min\{\lam_k^P,\lam_k^R\} \le \max\{\lam_k^P,\lam_k^R\} \le \lam_k^D
\end{equation}
\begin{equation}\label{BS}
\lam_k^B\le \lam_k^S
\end{equation}

\medskip

In the followings Lemmas we give upper bounds for the eigenvalues $\lam_k$ defined in \eqref{carac.autov} in terms of $k$ and $\Omega$. Here and in all the paper we will consider that $\Omega\subset \R^N$ is a bounded domain. These estimates will be useful to prove the main results since it provide us with the growth rate of the eigenvalues.

\begin{lema} \label{lema.comp.NDRT}
Let $\lam_k^N$, $\lam_k^P$, $\lam_k^D$, $\lam_k^B$ and $\lam_k^R$ be the $k$--th variational eigenvalues defined in \eqref{carac.autov}. Then
$$\lam_k^B\leq \lam_k^N\leq \min\{\lam_k^P,\lam_k^R\} \leq \max\{\lam_k^P,\lam_k^R\} \leq \lam_k^D\leq  C k^{p/N}$$
where $C$ depends only  on  $\Omega$ and the bounds \eqref{cota.rho}, \eqref{cont.coer.phi}.

\end{lema}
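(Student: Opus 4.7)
The statement splits into two parts: the chain of monotonicity inequalities and the quantitative upper bound on $\lam_k^D$.

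For the chain of inequalities, the plan is to exploit the inclusions
$\tilde\Gamma_k\subset\bar\Gamma_k\subset\Gamma_k$, which follow from
$W^{1,p}_0(\Omega)\subset W^{1,p}_0(\Omega)\oplus\R\subset W^{1,p}(\Omega)$.
Since $H(u)=\int_{\partial\Omega}|u|^p$ vanishes for every
$u\in W^{1,p}_0(\Omega)$, the Rayleigh quotients in the definitions of
$\lam_k^D,\lam_k^R,\lam_k^P,\lam_k^N$ all reduce to
$(G(u)+F(u,V))/F(u,\rho)$ when restricted to $\tilde\Gamma_k$; the same
expression appears for $\lam_k^P$ and $\lam_k^N$ on $\bar\Gamma_k$. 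Since
the infimum over a larger admissible class is smaller, one reads off at
once $\lam_k^N\le\min\{\lam_k^P,\lam_k^R\}$ and
$\max\{\lam_k^P,\lam_k^R\}\le\lam_k^D$. Finally, on the common class
$\Gamma_k$ the bound $H\ge 0$ gives
$(G+F(\cdot,V))/(H+F(\cdot,\rho))\le (G+F(\cdot,V))/F(\cdot,\rho)$
pointwise, which yields $\lam_k^B\le\lam_k^N$.

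For the upper bound I would implement the standard Weyl-type construction.
Fix a closed cube $Q_0\subset\Omega$ of some side $L>0$ depending only on
$\Omega$, partition it into $m^N$ equal subcubes with
$m=\lceil k^{1/N}\rceil$, and keep $k$ of them, $Q_1,\ldots,Q_k$, each of
side $h\ge c L\,k^{-1/N}$. Fix a non-zero $\phi\in C_c^\infty(Q)$ and define
$\phi_i(x):=\phi((x-x_i)/h)$, supported in $Q_i$, so each
$\phi_i\in W^{1,p}_0(\Omega)$ and the supports are pairwise disjoint. Let
$V_k=\mathrm{span}(\phi_1,\ldots,\phi_k)$ and set
$$
C:=\{\,u\in V_k : F(u,\rho)=1\,\}.
$$
Then $C$ is symmetric, compact (a closed bounded subset of a
finite-dimensional space) and homeomorphic to $S^{k-1}$ through an odd map,
so $\gamma(C)=k$ and $C\in\tilde\Gamma_k$.

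It then remains to estimate the supremum of the Dirichlet Rayleigh
quotient over $C$. For $u=\sum c_i\phi_i$, the disjointness of supports
together with the evenness and $p$-homogeneity of $\Phi$ provided by
Proposition \ref{potential.f} give
$G(u)=\sum|c_i|^p G(\phi_i)$, and analogously for $F(\cdot,V)$ and
$F(\cdot,\rho)$, so the quotient is a convex combination of the
quantities $(G(\phi_i)+F(\phi_i,V))/F(\phi_i,\rho)$, hence is bounded by
their maximum. A change of variables gives
$G(\phi_i)\le \beta h^{N-p}\|\nabla\phi\|_p^p$,
$F(\phi_i,V)\le\|V\|_\infty h^N\|\phi\|_p^p$,
$F(\phi_i,\rho)\ge\rho^- h^N\|\phi\|_p^p$, and so the maximum is bounded by
$C_1 h^{-p}+C_2\le C k^{p/N}$, with constants depending only on $\Omega$
and the structural bounds \eqref{cota.rho}, \eqref{cont.coer.phi}. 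Plugging
$C$ into the variational formula for $\lam_k^D$ yields the conclusion.
The only step requiring some care is the genus computation for $C$, which
is however the standard fact that a symmetric compact surface in a
$k$-dimensional space enclosing the origin has genus $k$; I expect no
serious obstacle.
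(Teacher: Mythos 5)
Your argument is correct, and for the quantitative bound it takes a genuinely different route from the paper. The chain $\lam_k^B\le\lam_k^N\le\min\{\lam_k^P,\lam_k^R\}\le\max\{\lam_k^P,\lam_k^R\}\le\lam_k^D$ is handled the same way in both cases: the paper simply invokes \eqref{BNRPD}, which is exactly the monotonicity-of-admissible-classes plus $H\ge 0$ reasoning you spell out via $\tilde\Gamma_k\subset\bar\Gamma_k\subset\Gamma_k$ and the vanishing of $H$ on $W^{1,p}_0(\Omega)$. For $\lam_k^D\le Ck^{p/N}$, however, the paper does not build test sets at all: it bounds the Rayleigh quotient by $\tfrac{\max\{\beta,V^+\}}{\rho^-}$ times the quotient of the problem $-\Delta_p u+|u|^{p-2}u=\mu|u|^{p-2}u$ with Dirichlet conditions, shifts $\mu=\tilde\mu+1$, and cites the known Garc\'ia-Azorero--Peral estimate $\tilde\mu_k\le Ck^{p/N}$. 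You instead re-prove that growth estimate directly by the Weyl-type packing construction: $k$ disjoint rescaled bumps, the genus-$k$ sphere $\{u\in V_k:F(u,\rho)=1\}$ in their span, the mediant (convex combination) bound for the quotient, and the scaling $G(\phi_i)\lesssim h^{N-p}$, $F(\phi_i,\rho)\gtrsim h^N$ with $h\simeq k^{-1/N}$; note that your exact splitting $G(u)=\sum|c_i|^pG(\phi_i)$ does use the evenness and $p$-homogeneity of $\Phi$ from Proposition \ref{potential.f} (or one can first bound $\Phi\le\beta|\xi|^p$ and split afterwards, which is what the paper's comparison implicitly does). The paper's route is shorter and delegates the combinatorial work to the cited reference; yours is self-contained and makes the dependence of the constant explicit. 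One cosmetic remark applying equally to both proofs: the constant also depends on $\|V\|_{\infty}$ (the paper's $\max\{\beta,V^+\}$ shows the same dependence), not only on $\Omega$, \eqref{cota.rho} and \eqref{cont.coer.phi} as the statement literally says.
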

\begin{proof} From \eqref{BNRPD}, it is enough to prove the last inequality.

Now, from \eqref{cont.coer.phi} we have
$$
\frac{G(u)+ F(u,V)}{F(u,\rho)}\le \frac{\max\{\beta,V^+\}}{\rho^-} \frac{\int_\Omega |\nabla u|^p+|u|^p}{\int_\Omega |u|^p},
$$
from where it follows that
\begin{align*}
\lam_k^D\le \frac{\max\{\beta,V^+\}}{\rho^-}\mu_k,
\end{align*}
where $\mu_k$ is the $k-$th eigenvalue of
\begin{equation} \label{ecuay}
\begin{cases}
  -\Delta_p u+|u|^{p-2}u=\mu  |u|^{p-2}u &\quad \textrm{ in } \Omega \\
   u =0 &\quad \textrm{ on } \partial \Omega.
 \end{cases}
\end{equation}

Observe that $u\in W^{1,p}_0(\Omega)$ is solution of \eqref{ecuay} if and only if $u$
is solution of
\begin{equation*}
\begin{cases}
  -\Delta_p u=\tilde \mu  |u|^{p-2}u &\quad \textrm{ in } \Omega \\
   u =0 &\quad \textrm{ on } \partial \Omega,
 \end{cases}
\end{equation*}
where $\tilde \mu = \mu-1$, which satisfies that (see \cite{GAP88})
\begin{align} \label{cc2}
\tilde \mu_k \le C k^{p/N},
\end{align}
and the result follows.
\end{proof}

\begin{lema} \label{lema.comp.S}
Let $\lam_k^B$ and $\lam_k^S$ be the $k$--th variational eigenvalue of $B(\Omega)$ and $S(\Omega)$ respectively. Then
$$\lam_k^B\leq \lam_k^S\leq C k^{\tfrac{p-1}{N-1}},$$
where $C$ is a constant depending on $V^+$, $\alpha$ and $\Omega$.
\end{lema}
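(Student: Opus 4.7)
The first inequality $\lam_k^B\le\lam_k^S$ is \eqref{BS}, already established, so the task reduces to the upper bound $\lam_k^S\le Ck^{(p-1)/(N-1)}$. The plan is to follow the reduction scheme of Lemma \ref{lema.comp.NDRT}: using \eqref{cont.coer.phi} and $V\le V^+$,
\[
\frac{G(u)+F(u,V)}{H(u)}\le \frac{\max\{\beta,V^+\}\int_\Omega(|\nabla u|^p+|u|^p)}{\int_{\partial\Omega}|u|^p},
\]
so by the min--max formula in \eqref{carac.autov} it is enough to bound the $k$-th variational eigenvalue $\mu_k^S$ of the model Rayleigh quotient
\[
R(u):=\frac{\int_\Omega(|\nabla u|^p+|u|^p)}{\int_{\partial\Omega}|u|^p}.
\]

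To bound $\mu_k^S$ from above I would exhibit a compact symmetric subset of $\Gamma_k$ on which $R$ is $\le C k^{(p-1)/(N-1)}$. Set $r=c_0 k^{-1/(N-1)}$; since $\partial\Omega$ is a compact $C^1$ hypersurface of finite $(N-1)$-dimensional Hausdorff measure, one can choose $k$ points $x_1,\dots,x_k\in\partial\Omega$ that are pairwise $2r$-separated. Using $C^1$ local charts that flatten $\partial\Omega$ near each $x_i$, define a Lipschitz tent function $\phi_i$ supported in $\Omega\cap B_r(x_i)$, equal to $1$ on $\Omega\cap B_{r/2}(x_i)$, with $|\nabla\phi_i|\le C/r$. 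A direct scaling computation gives
\[
\int_\Omega|\nabla\phi_i|^p\le C r^{N-p},\qquad \int_\Omega|\phi_i|^p\le C r^{N},\qquad \int_{\partial\Omega}|\phi_i|^p\ge c r^{N-1}.
\]

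Since the $\phi_i$ have pairwise disjoint supports, $p$-homogeneity yields, for any $u=\sum a_i\phi_i\in V_k:=\operatorname{span}\{\phi_1,\dots,\phi_k\}$,
\[
R(u)\le C\,\frac{r^{N-p}+r^N}{r^{N-1}}\le C'\bigl(r^{1-p}+r\bigr)\le C'' k^{(p-1)/(N-1)}.
\]
The unit sphere of $V_k$ inside $W^{1,p}(\Omega)$ is compact, symmetric, and has genus $k$, so it lies in $\Gamma_k$ and yields the desired bound on $\mu_k^S$. The main technical obstacle is the lower bound $\int_{\partial\Omega}|\phi_i|^p\ge cr^{N-1}$ together with the packing step: both rely on uniform control of the local boundary flattenings, which is precisely what the $C^1$-regularity hypothesis on $\Omega$ guarantees; once this is in hand, the computation and the reduction to the model problem are routine extensions of the strategy already employed in Lemma \ref{lema.comp.NDRT}.
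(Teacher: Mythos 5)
Your argument is correct, but it departs from the paper at the key step, so a comparison is in order. Like the paper, you dispose of the left inequality via \eqref{BS} and then bound the Rayleigh quotient $(G(u)+F(u,V))/H(u)$ by a constant times the model quotient $\int_\Omega(|\nabla u|^p+|u|^p)\big/\int_{\partial\Omega}|u|^p$ (the paper writes the constant as $\max\{1/\alpha,V^+\}$; your $\max\{\beta,V^+\}$ is the one that actually follows from \eqref{cont.coer.phi}), thereby controlling $\lam_k^S$ by the $k$--th variational eigenvalue $\mu_k$ of the $p$-Laplacian Steklov problem $-\Delta_p u+|u|^{p-2}u=0$ in $\Omega$, $|\nabla u|^{p-2}\partial_\eta u=\mu|u|^{p-2}u$ on $\partial\Omega$. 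At that point the paper simply quotes the growth estimate $\mu_k\le ck^{(p-1)/(N-1)}$ from \cite{Pi07}, whereas you re-prove it by a packing construction: $2r$-separated boundary points with $r\sim k^{-1/(N-1)}$, disjointly supported Lipschitz bumps $\phi_i$, and the unit sphere of their span, which is compact, symmetric, of genus $k$ by Borsuk--Ulam, on which every nonzero element has nonvanishing trace (it equals $a_i$ on $\partial\Omega\cap B_{r/2}(x_i)$), so the quotient is finite and bounded by $C(r^{1-p}+r)\le Ck^{(p-1)/(N-1)}$. Your scaling computation and the two technical ingredients you flag --- the lower trace bound $\int_{\partial\Omega}|\phi_i|^p\ge cr^{N-1}$ and the existence of at least $k$ separated points for a suitable $c_0$ --- are indeed guaranteed uniformly by the $C^1$ (in fact Lipschitz) regularity of $\partial\Omega$, up to the harmless adjustment of constants when $k$ is small. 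What the paper's route buys is brevity, since the needed bound is exactly the cited result; what yours buys is a self-contained proof that makes visible how the exponent $\tfrac{p-1}{N-1}$ arises from boundary-concentrated test functions, essentially reproducing the argument behind \cite{Pi07}.
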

\begin{proof}
From \eqref{cont.coer.phi} and \eqref{cota.rho} we have
$$
\frac{G(u)+F(u,V)}{H(u)}\leq \max\{\tfrac{1}{\alpha},V^+\} \frac{\int_\Omega |\nabla u|^p + \int_\Omega |u|^p}{\int_{\partial\Omega} |u|^p}
$$
from where it follows that
\begin{align} \label{lema.comp.S.ec1}
\lam_k^S\le \max\{\tfrac{1}{\alpha},V^+\} \mu_k,
\end{align}
where $\mu_k$ is the $k-$th eigenvalue of
\begin{align*}
\begin{cases}
 -\Delta_p u+ |u|^{p-2}u=0 &\quad \textrm{ in } \Omega\\
 |\nabla u|^{p-2}\frac{\partial u}{\partial \eta}=\mu |u|^{p-2}u  &\quad \textrm{ in } \partial \Omega.
\end{cases}
\end{align*}
Ii is proved in \cite{Pi07} the following estimate for $\mu_k$
\begin{equation} \label{lema.comp.S.ec2}
\mu_k \leq c k^{\frac{p-1}{N-1}}.
\end{equation}
where $c$ is a positive constant independent of $k$.

Finally, from \eqref{BS}, \eqref{lema.comp.S.ec1} and \eqref{lema.comp.S.ec2} the
result follows.
\end{proof}

\begin{rem}
From the previous lemmata, we have that
$$\lam_k^B\leq \min\{C k^{\tfrac{p}{N}}, C k^{\tfrac{p-1}{N-1}}\},$$
equivalently,
$$\lam_k^B\leq \left\{ \begin{array}{ll} C k^{\tfrac{p}{N}} & p\ge N,\\
 C k^{\tfrac{p-1}{N-1}} & p\le N.\end{array} \right.$$
\end{rem}

\section{Preliminaries on oscillatory integrals.}
In order to deal with the rate of convergence of the eigenvalues, the main tool that we use  is the study of oscillating integrals. These will allow us to replace an integral involving a rapidly oscillating function with one that involves its average in the unit cube.

Let $\rho$ be a $Q$-periodic weight. It is well-known that $\rho(\tfrac{x}{\ve})$ converges weakly* in $L^\infty$ to its average over $Q$. We are interested in the rate of the convergence in terms of $\ve$. In \cite{FBPS12} it is proved that
\begin{thm}[\cite{FBPS12}, Theorem 5.5]\label{teo_n_dim}
Let $\Omega\subset \R^N$ be a bounded domain with Lipschitz boundary and let $g\in L^\infty(\R^N)$ be a $Q-$periodic function, $Q$ being the unit cube in $\R^N$. Then, there exists a constant $C$ depending only on $p$, $\Omega$ and $\|g\|_{L^\infty(\R^N)}$ such that
\begin{align*}
\Big|\int_{\Omega} (g(\tfrac{x}{\ve}) - \bar g) |u|^p\Big|  \le C \ve \|\nabla u\|_{L^{p}(\Omega)}^p,
\end{align*}
for every $u\in W^{1,p}_0(\Omega)$, where $\bar g$ is the average of $g$ over $Q$.
\end{thm}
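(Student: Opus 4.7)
The plan is to produce, from the zero-mean periodic function $g-\bar g$, a bounded periodic vector field $G$ whose divergence equals $g-\bar g$, then rescale and integrate by parts so that each extra derivative of $G(x/\ve)$ contributes a factor of $\ve$.

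First I would construct the corrector. Since $g-\bar g\in L^\infty(\R^N)$ is $Q$-periodic with zero average over $Q$, I solve the cell problem
\[
\Delta \varphi = g-\bar g \quad\text{on the torus } \R^N/\Z^N,
\]
imposing $\int_Q \varphi=0$. Because $g-\bar g\in L^\infty\subset L^q$ for every $q<\infty$, standard elliptic regularity on the flat torus gives $\varphi\in W^{2,q}(\R^N/\Z^N)$ for all $q<\infty$; choosing $q>N$ and using Morrey's embedding I get $\nabla\varphi\in L^\infty(\R^N)$, extended periodically. Setting $G:=\nabla\varphi$, I obtain a $Q$-periodic $G\in L^\infty(\R^N;\R^N)$ with $\mathrm{div}\, G = g-\bar g$ and $\|G\|_\infty\le C(N,\|g\|_\infty)$.

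Next, the scaling identity $g(x/\ve)-\bar g=\ve\,\mathrm{div}_x\bigl(G(x/\ve)\bigr)$ lets me write
\[
\int_\Omega (g(\tfrac{x}{\ve})-\bar g)\,|u|^p\,dx
=\ve\int_\Omega \mathrm{div}_x\bigl(G(\tfrac{x}{\ve})\bigr)\,|u|^p\,dx.
\]
Since $u\in W^{1,p}_0(\Omega)$, I can integrate by parts without boundary contribution:
\[
\int_\Omega (g(\tfrac{x}{\ve})-\bar g)\,|u|^p\,dx
=-p\,\ve\int_\Omega G(\tfrac{x}{\ve})\cdot |u|^{p-2}u\,\nabla u\,dx.
\]
Estimating $|G(\cdot/\ve)|\le\|G\|_\infty$ and applying H\"older's inequality with exponents $p/(p-1)$ and $p$, I obtain
\[
\Bigl|\int_\Omega (g(\tfrac{x}{\ve})-\bar g)\,|u|^p\Bigr|
\le p\,\ve\,\|G\|_\infty\,\|u\|_{L^p(\Omega)}^{p-1}\,\|\nabla u\|_{L^p(\Omega)}.
\]
Finally, Poincar\'e's inequality in $W^{1,p}_0(\Omega)$ (using that $\Omega$ is bounded and Lipschitz) gives $\|u\|_{L^p}\le C_\Omega\|\nabla u\|_{L^p}$, and collecting constants yields the desired bound $C\,\ve\,\|\nabla u\|_{L^p}^p$.

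The main technical obstacle is the existence of a \emph{bounded} vector potential $G$: one has to justify that the cell-problem solution on the torus has $\nabla\varphi\in L^\infty$ rather than merely $\nabla\varphi\in L^q$. The key point is the iteration $g\in L^\infty\Rightarrow \varphi\in W^{2,q}$ for all finite $q$ by Calder\'on--Zygmund on the torus, followed by Sobolev embedding into $C^{1,\alpha}$. Once $G$ is in place, the rest is a clean integration by parts followed by H\"older and Poincar\'e; no oscillation cancellations beyond the one gained in the divergence identity are needed.
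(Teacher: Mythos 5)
Your argument is correct, and it is a genuinely different route from the one behind the quoted result: this theorem is imported from \cite{FBPS12}, where the estimate for the oscillating integral is obtained in an elementary way, essentially by tiling (a neighborhood of) $\Omega$ with cubes of side $\ve$, using that $g_\ve-\bar g$ has zero average on each such cube, and applying a Poincar\'e-type inequality on each cube to the function $|u|^p\in W^{1,1}$, then summing; the zero boundary values (or the Lipschitz boundary) are what control the cubes meeting $\partial\Omega$. You instead solve the cell problem $\Delta\varphi=g-\bar g$ on the torus, use Calder\'on--Zygmund plus Morrey to get a bounded periodic field $G=\nabla\varphi$ with $\mathrm{div}\,G=g-\bar g$, write $g(\tfrac{x}{\ve})-\bar g=\ve\,\mathrm{div}_x\bigl(G(\tfrac{x}{\ve})\bigr)$, and integrate by parts against $|u|^p$. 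That is the classical corrector/vector-potential device from homogenization, and it gives a shorter proof with no boundary-cube bookkeeping; in fact it never uses the Lipschitz hypothesis (only boundedness of $\Omega$, via Poincar\'e in $W^{1,p}_0$), so you prove slightly more. The price is the reliance on torus elliptic regularity to get $G\in L^\infty$, whereas the cube-decomposition proof needs only the Poincar\'e inequality on a cube and adapts directly to functions without zero trace (which is exactly what the companion Theorem \ref{teo.neu} requires). Two small points you should make explicit: the chain rule $\nabla(|u|^p)=p|u|^{p-2}u\nabla u$ puts $|u|^p$ in $W^{1,1}_0(\Omega)$ for every $p>1$, and the integration by parts with $G_\ve\in L^\infty$, $\mathrm{div}\,G_\ve\in L^\infty$ against a $W^{1,1}_0$ function is justified by density of $C^\infty_c(\Omega)$; both are routine.
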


With the aid of Theorem \ref{teo_n_dim}, in \cite{FBPS12} we were able to analyze the Dirichlet boundary condition case. To deal with different boundary conditions, we need a similar Theorem that allows us to include the function space $W^{1,p} (\Omega)$.

The fact of enlarge the set of test functions is reflected in the need for more regularity on the domain $\Omega$.  In \cite{Sa12} the following result is proved.

\begin{thm}[\cite{Sa12}, Theorem 4.3] \label{teo.neu}
Let $\Omega\subset \R^n$ be a bounded domain with $C^1$ boundary and let $g\in L^\infty(\R^N)$ be a $Q-$periodic function, $Q$ being the unit cube in $\R^N$. Then for every $u\in W^{1,p}(\Omega)$ there exists a constant $C$ depending only on $p$, $\Omega$ and $\|g\|_{L^\infty(\R^N)}$ such that
$$
\left| \int_\Omega (g(\tfrac{x}{\ve})  - \bar{g}) u \right| \leq C\ve \|u\|_{W^{1,p}(\Omega)}.
$$
\end{thm}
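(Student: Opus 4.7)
The plan is to write $g(\cdot/\ve) - \bar g$ as the divergence of a vector field of $L^\infty$-norm order $\ve$ via a cell problem, and then integrate by parts, being careful to absorb the boundary term using the trace theorem.

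First, since $g - \bar g$ has zero mean on $Q$, I would solve the cell problem
$$
-\Delta w = g - \bar g \qquad \textrm{on the torus } \R^N/\Z^N,
$$
normalized to have zero mean. Since $g \in L^\infty$, the Calder\'on--Zygmund estimates on the torus yield $w \in W^{2,q}$ for every $q < \infty$, and Morrey's embedding then gives $\nabla w \in L^\infty(\R^N)$ with $\|\nabla w\|_{L^\infty} \leq C \|g\|_{L^\infty}$. (Note that a rougher datum, say only $L^2$, would at best give $\nabla w \in L^2$ and would not suffice.)

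Second, I would rescale: set $w_\ve(x) := \ve^2 w(x/\ve)$, so that
$$
-\Delta w_\ve(x) = g(x/\ve) - \bar g, \qquad \nabla w_\ve(x) = \ve (\nabla w)(x/\ve), \qquad \|\nabla w_\ve\|_{L^\infty} \leq C\ve.
$$
This expresses the oscillating factor as the divergence of an $L^\infty$ vector field of order $\ve$. Then, for $u \in W^{1,p}(\Omega)$, the $C^1$ regularity of $\partial \Omega$ together with the $C^{1,\alpha}$ regularity of $w_\ve$ justifies integrating by parts:
$$
\int_\Omega (g(x/\ve) - \bar g)\, u\, dx = \int_\Omega \nabla w_\ve \cdot \nabla u\, dx - \int_{\partial \Omega} (\nabla w_\ve \cdot \nu)\, u\, dS.
$$

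Third, I would estimate each term. The interior term is bounded by H\"older's inequality:
$$
\left|\int_\Omega \nabla w_\ve \cdot \nabla u\, dx\right| \leq \|\nabla w_\ve\|_{L^\infty} \|\nabla u\|_{L^1(\Omega)} \leq C\ve \,\|\nabla u\|_{L^p(\Omega)}.
$$
The boundary term is controlled via the trace inequality $\|u\|_{L^p(\partial \Omega)} \leq C\|u\|_{W^{1,p}(\Omega)}$, valid for $C^1$ domains:
$$
\left|\int_{\partial\Omega} (\nabla w_\ve \cdot \nu)\, u\, dS\right| \leq \|\nabla w_\ve\|_{L^\infty} \|u\|_{L^1(\partial\Omega)} \leq C\ve \,\|u\|_{W^{1,p}(\Omega)}.
$$
Adding these two bounds gives the claimed estimate.

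The main obstacle is the regularity step: the $L^\infty$ bound on $\nabla w$ from merely $L^\infty$ data is what makes the argument work, and it is precisely what forces the use of $L^q$-Calder\'on--Zygmund for large $q$ together with Morrey embedding. The $C^1$ assumption on $\partial \Omega$ enters only through the trace theorem, which is why (in contrast to Theorem \ref{teo_n_dim} for $W^{1,p}_0$) one cannot drop the boundary regularity; Lipschitz boundary would actually suffice for this step, so the $C^1$ hypothesis has a small amount of slack.
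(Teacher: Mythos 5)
Your argument is correct: the cell-problem corrector $w$ with $-\Delta w=g-\bar g$ on the torus is well defined since the datum has zero mean, Calder\'on--Zygmund plus the embedding $W^{2,q}\hookrightarrow C^{1,\alpha}$ for $q>N$ gives $\|\nabla w\|_{L^\infty}\le C\|g\|_{L^\infty}$, the rescaling $w_\ve(x)=\ve^2 w(x/\ve)$ produces $\|\nabla w_\ve\|_{L^\infty}\le C\ve$ with $-\Delta w_\ve=g_\ve-\bar g$ a.e., and the Green formula is legitimate by density of $C^\infty(\overline\Omega)$ in $W^{1,p}(\Omega)$ together with continuity of the trace; the two resulting terms are bounded exactly as you say. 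However, this is a genuinely different route from the proof the paper relies on (it cites \cite{Sa12}, Theorem 4.3, the $W^{1,p}(\Omega)$ analogue of Theorem \ref{teo_n_dim} from \cite{FBPS12}): there the argument is elementary and quantitative, decomposing $\Omega$ into cubes of side $\ve$, using that $g_\ve-\bar g$ has zero average on each full cube so that one may subtract the cube average of the test function and apply the scaled Poincar\'e inequality, and then treating separately the layer of cubes meeting $\partial\Omega$, which is where the boundary regularity enters in that approach. Your corrector-plus-duality argument is shorter and makes the role of the boundary transparent (only the trace inequality is used, so Lipschitz regularity indeed suffices, as you note), at the price of invoking elliptic regularity on the torus; the cube-decomposition proof avoids any elliptic machinery and keeps the constants explicit and elementary, which is in the spirit of the quantitative estimates pursued in this series of papers. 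One cosmetic remark: in the applications here the theorem is used with $|u|^p\in W^{1,1}(\Omega)$ in place of $u$, so it is worth observing that your proof covers $p=1$ as well, which it does since the trace and H\"older steps are valid in that case.
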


\bigskip

Now, we need a couple of simple technical lemmas which are used in the proof of our main results.

\begin{lema} \label{lema0}
Let $u\in W^{1,p}(\Omega)$. Then
$$\||u|^p\|_{W^{1,1} (\Omega)} \leq p\|u\|_{W^{1,p}(\Omega)}^p \leq C(G(u)+F(u,\bar \rho))$$
\end{lema}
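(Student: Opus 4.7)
The plan is to treat the two inequalities separately, both of which are essentially routine.

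For the first inequality, I would start by noting that since $p>1$ the function $t\mapsto|t|^p$ is $C^1$, so the standard chain rule in Sobolev spaces gives $|u|^p\in W^{1,1}(\Omega)$ with weak gradient $\nabla(|u|^p)=p|u|^{p-2}u\,\nabla u$, hence $|\nabla(|u|^p)|=p|u|^{p-1}|\nabla u|$ a.e. I would then unfold the $W^{1,1}$ norm as
$$
\||u|^p\|_{W^{1,1}(\Omega)}=\int_\Omega|u|^p+p\int_\Omega|u|^{p-1}|\nabla u|,
$$
apply H\"older's inequality with exponents $p/(p-1)$ and $p$ to the cross term, and then use Young's inequality $a^{(p-1)/p}b^{1/p}\le\tfrac{p-1}{p}a+\tfrac{1}{p}b$ with $a=\int_\Omega|u|^p$ and $b=\int_\Omega|\nabla u|^p$. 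This produces
$$
p\int_\Omega|u|^{p-1}|\nabla u|\le(p-1)\int_\Omega|u|^p+\int_\Omega|\nabla u|^p\le(p-1)\|u\|_{W^{1,p}(\Omega)}^p,
$$
and combining with the trivial bound $\int_\Omega|u|^p\le\|u\|_{W^{1,p}(\Omega)}^p$ delivers the factor $p$.

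For the second inequality, the idea is just to invoke the coercivity of $\Phi$ and the lower bound on $\bar\rho$. From \eqref{cont.coer.phi} one has $\int_\Omega|\nabla u|^p\le\alpha^{-1}G(u)$. Since $\bar\rho=\int_Q\rho(y)\,dy\ge\rho^->0$ by \eqref{cota.rho}, we also have $\int_\Omega|u|^p=\bar\rho^{-1}F(u,\bar\rho)\le(\rho^-)^{-1}F(u,\bar\rho)$. Adding these two bounds yields
$$
\|u\|_{W^{1,p}(\Omega)}^p\le\tfrac{1}{\alpha}G(u)+\tfrac{1}{\rho^-}F(u,\bar\rho)\le C\bigl(G(u)+F(u,\bar\rho)\bigr),
$$
with $C=\max\{1/\alpha,1/\rho^-\}$, which depends only on the structural constants.

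I do not anticipate any genuine obstacle: the only point that requires a line of justification is the applicability of the chain rule to $|u|^p$ for $u\in W^{1,p}$, but this follows from the standard Sobolev composition result since $|t|^p\in C^1(\R)$ when $p>1$. Everything else is H\"older, Young, and the two structural bounds already available from Proposition \ref{potential.f} and \eqref{cota.rho}.
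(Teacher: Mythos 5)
Your proposal is correct in substance and takes essentially the same route as the paper: the chain rule plus Young's inequality to bound $\||u|^p\|_{W^{1,1}(\Omega)}$, and the coercivity bound \eqref{cont.coer.phi} together with $\bar\rho\ge\rho^->0$ from \eqref{cota.rho} to control $\|u\|_{W^{1,p}(\Omega)}^p$ by $G(u)+F(u,\bar\rho)$. One small slip to fix: your intermediate claim $(p-1)\int_\Omega|u|^p+\int_\Omega|\nabla u|^p\le(p-1)\|u\|_{W^{1,p}(\Omega)}^p$ only holds when $p\ge 2$ (for $1<p<2$ the gradient term has coefficient $1>p-1$); simply keep the terms together, as the paper does, writing $\||u|^p\|_{W^{1,1}(\Omega)}\le p\int_\Omega|u|^p+\int_\Omega|\nabla u|^p\le p\|u\|_{W^{1,p}(\Omega)}^p$, and the factor $p$ comes out for every $p>1$.
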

where $\rho$ is an arbitrary weight satisfying \eqref{cota.rho} and $C$ is a constant depending on $p$, $\rho$ and the bounds of \eqref{cont.coer.phi}.
\begin{proof}
  By using Young's   inequality
\begin{align} \label{lema0.ec1}
\begin{split}
 \||u|^p\|_{W^{1,1} (\Omega)} &= \||u|^p\|_{L^1(\Omega)} + p \| |u|^{p-1} \nabla u\|_{L^1(\Omega)}   \\
 &\leq p\|u\|_{L^p(\Omega)}^p + \|\nabla u\|_{L^p(\Omega)}^p\\
 &\leq p\|u\|_{W^{1,p}(\Omega)}^p.
\end{split}
\end{align}
Moreover, by \eqref{cont.coer.phi}
\begin{align} \label{lema0.ec2}
\begin{split}
\|u\|^p_{W^{1,p}(\Omega)} &\leq
 \frac{1}{\bar \rho} \left(\bar \rho \int_\Omega |u|^p +\frac{\bar \rho}{\alpha}\int_\Omega \Phi(x,\nabla u)\right)\\
 &\leq \frac{1}{\bar \rho} \max\{\tfrac{\bar \rho}{\alpha},1\}(G(u)+F(u,\bar\rho)).
\end{split}
\end{align}
From \eqref{lema0.ec1} and \eqref{lema0.ec2}, the Lemma follows.
\end{proof}

\begin{lema} \label{lema1}
Let $u\in W^{1,p}(\Omega)$ and $\rho, V\in L^\infty(\R^N)$ be two $Q-$periodic functions satisfying  \eqref{cota.rho}. Then there exists a constant $c$ depending only on $p$, $\|\rho\|_{L^\infty(\R^N)}$, $\|V\|_{L^\infty(\R^N)}$, the constants in \eqref{cota.rho} and \eqref{cont.coer.phi} such that
$$
\frac{F(u,\bar \rho)}{F(u,\rho_\ve)}\leq 1+c\ve\frac{F(u,\bar V)+G(u)}{F(u,\bar \rho)}
$$
and
$$
\frac{F(u,\rho_\ve)}{F(u,\bar\rho)}\leq 1+c\ve\frac{F(u,V_\ve)+G(u)}{F(u,\rho_\ve)}.
$$
\end{lema}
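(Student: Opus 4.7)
The plan is to rewrite each ratio as $1 + (\text{oscillating integral})/(\text{denominator})$, control the oscillating integral via Theorem~\ref{teo.neu}, and absorb the resulting Sobolev norm using Lemma~\ref{lema0} together with the two-sided bounds on $\rho$ and $V$.

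Concretely, I would first write
$$
F(u,\bar\rho) - F(u,\rho_\ve) \;=\; \int_\Omega (\bar\rho - \rho_\ve)\,|u|^p.
$$
Since $u\in W^{1,p}(\Omega)$ implies $|u|^p\in W^{1,1}(\Omega)$, I apply Theorem~\ref{teo.neu} with $g=\rho$ and exponent $1$, taking $|u|^p$ as the test function, to obtain
$$
\bigl|F(u,\bar\rho) - F(u,\rho_\ve)\bigr| \;\le\; C\ve\, \||u|^p\|_{W^{1,1}(\Omega)}.
$$
Lemma~\ref{lema0}, invoked with the weight $V$ (which by hypothesis also satisfies \eqref{cota.rho}), then bounds the right-hand side by $C\ve\bigl(G(u)+F(u,\bar V)\bigr)$.

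For the first displayed inequality I would then divide by $F(u,\rho_\ve)$ and replace the denominator via the elementary pointwise bound $F(u,\rho_\ve)\ge (\rho^-/\bar\rho)\,F(u,\bar\rho)$; this produces exactly the claim. For the second displayed inequality I divide the same oscillating bound by $F(u,\bar\rho)$, use the reverse elementary inequality $F(u,\bar\rho)\ge(\bar\rho/\rho^+)\,F(u,\rho_\ve)$, and finally swap $\bar V$ for $V_\ve$ via $F(u,\bar V)\le (V^+/V^-)\,F(u,V_\ve)$, which again follows directly from the two-sided bounds on $V$.

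I do not anticipate any substantive obstacle: the proof is essentially an assembly of Theorem~\ref{teo.neu} (applied to $|u|^p$ as a $W^{1,1}$ function) and Lemma~\ref{lema0}. The only bookkeeping is choosing the right weight for Lemma~\ref{lema0} and tracking comparability constants in the last step, both of which are immediate from the pointwise bounds $\rho^-\le\rho\le\rho^+$ and $V^-\le V\le V^+$.
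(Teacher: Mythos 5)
Your argument is correct and follows essentially the same route as the paper: apply Theorem~\ref{teo.neu} (in its $W^{1,1}$ form) to $|u|^p$ to control the oscillating integral $\int_\Omega(\rho_\ve-\bar\rho)|u|^p$ by $C\ve\||u|^p\|_{W^{1,1}(\Omega)}$, absorb this via Lemma~\ref{lema0} into $C\ve\bigl(G(u)+F(u,\bar V)\bigr)$, and then trade denominators and the potential term using the pointwise bounds $\rho^-\le\rho\le\rho^+$ and $V^-\le V$ (the paper's proof uses exactly these comparability constants, e.g.\ the factor $\bar\rho/\rho^-$ in the first inequality and $1/V^-$ in the second). The only bookkeeping difference is that you swap $\bar V$ for $V_\ve$ at the end with the ratio $V^+/V^-$ where the paper uses $\bar V/V^-$; both are equally valid.
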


\begin{proof}
Applying  Theorem \ref{teo.neu} we obtain that
\begin{align} \label{lema1.ec1}
  \frac{\bar{\rho}\int_{\Omega} |u|^p}{\int_{\Omega} \rho_\ve|u|^p} \leq 1+ C\ve \frac{\| |u|^p\|_{W^{1,1}(\Omega)}}{\int_\Omega \rho_\ve |u|^p},
\end{align}
By Lemma \ref{lema0} and \eqref{cota.rho} we bound \eqref{lema1.ec1} as
\begin{align}
 1+ C\ve\frac{\bar\rho}{\rho^-} \frac{F(u,\bar V)+G(u)}{F(u,\bar\rho)}.
\end{align}

Similarly, by Lemma \ref{lema0} and \eqref{cota.rho} we get
\begin{align*}
  \frac{\int_{\Omega} \rho_\ve|u|^p}{\bar{\rho} \int_{\Omega} |u|^p} &
  \leq 1+ C\ve \frac{\| |u|^p\|_{W^{1,1}(\Omega)}}{\bar\rho \int_\Omega  |u|^p} \\
  & \leq
1+ C\ve \max\{1,\bar V\}\frac{\rho^-}{\bar\rho} \frac{1}{V^-}\frac{G(u)+F(u,V_\ve)}{F(u,\rho_\ve)}.
\end{align*}
This completes the proof of the Lemma.
\end{proof}

\section{Main results.}

The proofs of Theorems \ref{rateDNRT} and \ref{rateS} follow the same general lines of
the ones of Theorem 5.6 in \cite{FBPS12}. The fundamental tool to estimate the rates
of convergence of the eigenvalues is the error bound of oscillating integrals given in
Theorems \ref{teo_n_dim} and \ref{teo.neu}. Observe that the regularity of the domain
$\Omega$ considered in equations \eqref{ec.eps.d}--\eqref{ec.eps.s} are the necessary
to apply Theorems  \ref{teo.neu} and \ref{teo_n_dim}, i.e., Lipschitz regularity in
equation \eqref{ec.eps.d} and $C^1$ regularity in all other cases.

\subsection{Proof of Theorem \ref{rateDNRT}} The Dirichlet boundary condition case, i.e. problem $D_\ve(\Omega)$, was treated in \cite[Theorem 5.6]{FBPS12}.
We prove the result in detail for problem $N_\ve(\Omega)$. For $R_\ve(\Omega)$, $P_\ve(\Omega)$and $B_\ve(\Omega)$ the proofs are very similar and we will make a sketch highlighting only the differences.

For simplicity we will denote $\lam_k^\ve$ and $\lam_k$ (without the superindex $N$) to the $k$--th variational eigenvalue of $N_\ve(\Omega)$ and its limit problem obtained as $\ve\cf 0$.

Let $\delta>0$ and let $G_\delta^k\subset W^{1,p}(\Omega)$ be a compact, symmetric set of genus $k$ such that
\begin{equation} \label{ecuacion.n0}
 \lam_k =  \sup_{u \in G_\delta^k} \frac{ G(u)+ F(u,\bar{V})}{F(u,\bar\rho)} + O(\delta).
\end{equation}

We use now the set $G_\delta^k$, which is admissible in the variational
characterization of the $k$--th eigenvalue $\lam_k^\ve$, in order to find a bound for
it as follows,
\begin{align} \label{ecuacion.n1}
 \lam_k^\ve  \leq   \sup_{u \in G_\delta^k} \frac{G(u)+F(u,V_\ve)}{F(u,\bar \rho) } \frac{F(u,\bar \rho)}{F(u,\rho_\ve)}.
\end{align}
Now, we look for bounds of the two quotients in \eqref{ecuacion.n1}.

For every function $u\in G_\delta^k\subset W^{1,p}(\Omega)$ we can apply Theorem
\ref{teo.neu} and we obtain that
\begin{align} \label{ecuacion.n2}
\frac{G(u)+F(u,V_\ve)}{F(u,\bar \rho)} &\leq
\frac{G(u)+F(u,\bar V)}{F(u,\bar \rho)} + C\ve\frac{\||u|^p\|_{W^{1,1}(\Omega)}}{F(u,\bar \rho)}.
\end{align}
By using Lemma \ref{lema0}, we have for each $u\in G_\delta^k$ there exists some
constant $c>0$ such that
\begin{align} \label{ecuacion.n3}
\begin{split} \frac{\||u|^p\|_{W^{1,1}(\Omega)}}{F(u,\bar\rho)} &
 \leq C \frac{G(u)+F(u,\bar V)}{F(u,\bar\rho)} \\
 & \leq C \sup_{v \in G_\delta^k} \frac{G(v)+F(v,\bar V)}{F(v,\bar\rho)}
 \\  & =C(\lam_k + O(\delta)).
\end{split}
\end{align}
Since $u\in G_\delta^k\subset W^{1,p}(\Omega)$,  by applying  Lemma \ref{lema1}  and
\eqref{ecuacion.n3} we obtain that
\begin{align} \label{ecuacion.n4}
\begin{split}
  \frac{F(u,\bar{\rho})}{F(u,\rho_\ve)} &\leq 1+C\ve\frac{\||u|^p\|_{W^{1,1}(\Omega)}  }{F(u,\bar\rho)}  \\
&\leq 1+ C\ve (\lam_k +O(\delta)).
\end{split}
\end{align}
Then, combining \eqref{ecuacion.n2}, \eqref{ecuacion.n3} and  \eqref{ecuacion.n4} we
find that
$$
   \lam_k^\ve \leq \left(   \lam_k + O(\delta) +C\ve(  \lam_k + O(\delta) )\right) \left( 1+ C \ve (  \lam_k + O(\delta))  \right).
$$
Letting $\delta\to 0$ we get
\begin{align} \label{ecuacion.n5}
  \lam_k^\ve -   \lam_k \leq C\ve (  \lam_k^2+  \lam_k).
\end{align}
In a similar way, interchanging the roles of $  \lam_k$ and $  \lam_k^\ve$, we obtain
\begin{align} \label{ecuacion.n6}
  \lam_k -   \lam_k^\ve \leq C\ve ((  \lam_k^\ve)^2+  \lam_k^\ve).
\end{align}

So, from \eqref{ecuacion.n5} and \eqref{ecuacion.n6}, we arrive at
$$
|  \lam_k^\ve -   \lam_k|\le C\ve \max\{  \lam_k^2+ \lam_k, (  \lam_k^\ve)^2+  \lam_k^\ve\}.
$$

In order to complete the proof of the Theorem, we need an estimate on $\lam_k$ and $\lam_k^\ve$. By Lemma \ref{lema.comp.NDRT} we can compare them with the $k$--th variational eigenvalue of the $p-$Laplacian obtaining
$$
|  \lam_k^\ve -   \lam_k|\le C\ve k^{2p/N},
$$
and  the proof is complete. \qed

\medskip

In the remaining of this subsection, we highlight the difference between the Neumann case and the rest of the boundary conditions with the exception of the Steklov problem that has a separate treatment.

\medskip
\textbf{Dirichlet}:
As we mentioned before, this problem was addressed in \cite[Theorem 5.6]{FBPS12}. Here, functions are taken in $W^{1,p}_0(\Omega)$ instead $W^{1,p}(\Omega)$ in the variational characterization of the eigenvalues. This leads to use Theorem \ref{teo_n_dim} instead of Theorem \ref{teo.neu} to estimate the oscillating integrals.
Now, for each function $u\in W^{1,p}_0(\Omega)$ we can apply Theorem \ref{teo_n_dim} and obtain an analogous equation to \eqref{ecuacion.n2}
\begin{align*}
\frac{G(u)+F(u,\rho_\ve)}{F(u,\bar \rho)} &\leq
\frac{G(u)+F(u,\bar V)}{F(u,\bar \rho)} + C\ve\frac{\|\nabla u\|^p_{L^p(\Omega)}}{F(u,\bar \rho)}.
\end{align*}

Now, the difference with the Neumann case is the way we bound the quotients $\|\nabla
u\|^p_{L^p(\Omega)}/F(u,\bar \rho) $ and $\|\nabla u\|^p_{L^p(\Omega)}/F(u,\rho_\ve)$.

By \eqref{cota.rho} and \eqref{cont.coer.phi} we get
\begin{equation*}
\begin{split}
\frac{\|\nabla u\|_{L^{p}(\Omega)}^p}{F(u,\rho_\ve)}&\le \frac{\bar\rho}{\rho^-}\frac{\|\nabla u\|_{L^p(\Omega)}^p}{F(u,\bar\rho)}\\
&\le \frac{\bar\rho}{\rho^-} \frac{1}{\alpha}\frac{G(u)+F(u,\bar V)}{F(u,\bar \rho)}
\end{split}
\end{equation*}
and
\begin{equation*}
\begin{split}
\frac{\|\nabla u\|_{L^{p}(\Omega)}^p}{F(u,\bar \rho)}\le
\frac{\rho^+}{\bar \rho}\frac{\|\nabla u\|_{L^{p}(\Omega)}^p}{F(u,\rho_\ve)}.
\end{split}
\end{equation*}
Taking into account this changes, the proof is analogous to the Neumann one.

\medskip
\textbf{Non-Flux}:
Let $u\in W^{1,p}_0(\Omega)\oplus \R$, then $u=v+c$ where $v\in W^{1,p}_0(\Omega)$ and $c$ is a constant depending on $u$. It follows that $u-c \in W^{1,p}_0(\Omega)$. Observe that if $u\in W^{1,p}_0(\Omega)\oplus \R$ then $|u|^p\in W^{1,1}_0(\Omega)\oplus \R$ and then $|u|^p-c \in W^{1,1}_0(\Omega)$. By using Theorem \ref{teo_n_dim} and Theorem \ref{teo.neu} together with \eqref{cota.rho} we observe that
\begin{align} \label{d.t0}
\begin{split}
\int_\Omega (\rho_\ve -\bar \rho)|u|^p &= \int_\Omega (\rho_\ve -\bar \rho) (|u|^p-c)+\int_\Omega (\rho_\ve -\bar \rho)c \\
&\leq C\ve \|\nabla u\|_{L^p(\Omega)}^p + C\ve\|c\|_{W^{1,p}(\Omega)} \\
&\leq C\ve(\|\nabla u\|_{L^p(\Omega)}^p  +|c|)  .
\end{split}
\end{align}

Observe that $u\in W^{1,p}_0(\Omega)\oplus \R=\{u\in W^{1,p}(\Omega) : u=c  \textrm{ on } \partial \Omega \textrm{ with } c \in\R\}$. If $u=v+c\in W^{1,p}_0(\Omega)\oplus \R $, by the Sobolev trace inequality it follows that
\begin{equation} \label{d.t}
  \|u\|_{W^{1,p}(\Omega)}^p \geq c_T \|u\|_{L^p(\partial \Omega)}^p = c_T |c |^p |\partial \Omega|^p.
\end{equation}
Moreover,
\begin{equation} \label{d.t2}
   \|u\|_{W^{1,p}(\Omega)}^p \geq \|\nabla u\|_{L^p(\Omega)}^p.
\end{equation}
Then, by \eqref{d.t} and \eqref{d.t2} it follows that
\begin{equation} \label{d.t3}
   \|\nabla u\|_{L^p(\Omega)}^p + |c | \leq C\|u\|_{W^{1,p}(\Omega)}^p.
\end{equation}

From \eqref{d.t0}, \eqref{d.t3} and  Lemma \ref{lema0} it follows that if $u\in W^{1,p}_0(\Omega)\oplus \R$ then
$$
\int_\Omega (\rho_\ve -\bar \rho)|u|^p \leq C\|u\|_{W^{1,p}(\Omega)}^p \leq C(G(u)+F(u,V))
$$
where $V$ is an arbitrary weight.

Taking into account this remark, the proof in the non-flux case is analogous to the Neumann one.
Observe that for this problem, we only need the boundary $\partial \Omega$ to be Lipschitz since we have used Theorem \ref{teo_n_dim}.

\medskip

\textbf{Robin}:
The extra term $\beta H(u)$ is irrelevant in the proof. This case is completely analogous to the Neumann and it follows by means of \eqref{cota.rho}, \eqref{cont.coer.phi}, Theorem \ref{teo.neu}, Lemma \ref{lema0} and Lemma \ref{lema1}.

\medskip
\textbf{Eigenvalue depending boundary condition}: From Theorem \ref{teo.neu}, Lemma
\ref{lema0} and \eqref{cota.rho} we have that for every $u\in W^{1,p}(\Omega)$
\begin{align*}
\frac{F(u,\bar \rho)+H(u)}{F(u,\rho_\ve)+H(u)} & \leq 1+C\ve\frac{ \|u\|_{W^{1,p}(\Omega)}}{F(u,\rho_\ve)+H(u)} \\
& \leq  1+C\ve\frac{ G(u)+F(u,\bar V)}{F(u,\bar \rho)+H(u)}
\end{align*}
and
\begin{align*}
\begin{split}
\frac{G(u)+F(u,V_\ve)}{H(u)+F(u,\bar\rho)}&\leq \frac{G(u)+F(u,\bar V)}{H(u)+F(u,\bar\rho)}+C\ve\frac{ \|u\|_{W^{1,p}(\Omega)}}{H(u)+F(u,\bar\rho)} \\ &\leq (1+C\ve)\frac{G(u)+F(u,\bar V)}{H(u)+F(u,\bar\rho)}.
\end{split}
\end{align*}
Observe that \eqref{cota.rho} and \eqref{cont.coer.phi} allow us to bound in the opposite sense, that is, for each $u\in W^{1,p}(\Omega)$,
\begin{align*}
\frac{F(u,\rho_\ve)+H(u)}{F(u,\bar \rho)+H(u)}\leq  1+C\ve\frac{ G(u)+F(u,V_\ve)}{F(u,\rho_\ve)+H(u)},
\end{align*}
\begin{align*}
\begin{split}
\frac{G(u)+F(u,\bar V)}{H(u)+F(u,\rho_\ve)}\leq (1+C\ve)\frac{G(u)+F(u, V_\ve)}{H(u)+F(u,\rho_\ve)}.
\end{split}
\end{align*}

Having changed this slight detail, the proof is analogous to the Neumann case.

\subsection{Proof of Theorem \ref{rateS}}
The proof of the Steklov case is simpler due to that in this case there is only an oscillating weight in the equation $S_\ve(\Omega)$.

Let $\delta>0$ and let $G_\delta^k\subset W^{1,p}(\Omega)$ be a compact, symmetric set of genus $k$ such that
\begin{align} \label{ecuacion.sn0}
 \lam_k =  \sup_{u \in G_\delta^k} \frac{G(u)+F(u,\bar V)}{H(u)} + O(\delta).
\end{align}
Being $G_\delta^k$  admissible in the variational characterization of $\lam_k^\ve$, we get
\begin{align} \label{ecuacion.sn1}
 \lam_k^\ve  \leq   \sup_{u \in G_\delta^k} \frac{G(u)+F(u, V_\ve)}{H(u)}.
\end{align}
For every function $u\in G_\delta^k\subset W^{1,p}(\Omega)$ we can apply Theorem \ref{teo.neu} obtaining
\begin{align} \label{ecuacion.sn2}
\frac{G(u)+F(u, V_\ve)}{H(u)}\leq
\frac{G(u)+F(u, \bar V)}{H(u)} + C\ve\frac{\||u|^p\|_{W^{1,1}(\Omega)}}{H(u)}.
\end{align}

Now, for each $u\in G_\delta^k$ we can apply Lemma \ref{lema0}  to bound
\begin{align} \label{ecuacion.sn3}
\begin{split}
 \frac{\||u|^p\|_{W^{1,1}(\Omega)}}{H(u)}& \leq c \frac{G(u)+F(u,\bar V)}{H(u)} \\
 & \leq c \sup_{v \in G_\delta^k} \frac{G(v)+F(v,\bar V)}{H(v)} \\
 & =c(\lam_k + O(\delta)).
\end{split}
\end{align}
Then, combining \eqref{ecuacion.sn1}, \eqref{ecuacion.sn2} and  \eqref{ecuacion.sn3} we find that
$$
   \lam_k^\ve \leq \lam_k + O(\delta) +  C \ve (  \lam_k + O(\delta)).
$$
In a similar way, interchanging the roles of $  \lam_k$ and $  \lam_k^\ve$, we can obtain the opposite inequality. Letting $\delta\to 0$ we arrive at
$$
|  \lam_k^\ve -   \lam_k|\le C\ve \max\{ \lam_k, \lam_k^\ve\}.
$$
By Lemma \ref{lema.comp.S} we obtain that
$$
|\lam_k^\ve -   \lam_k|\le C\ve k^\frac{p-1}{N-1}
$$
and the proof is complete.

\section*{Acknowledgements}

This work was partially supported by Universidad de Buenos Aires under grant
20020100100400 and by CONICET (Argentina) PIP 5478/1438.

\bibliographystyle{amsplain}
\bibliography{biblio-tesis}

\end{document}